\makeatletter\@addtoreset {equation}{section}\makeatother
\theoremstyle{plain}
\newtheorem{theo}{Theorem}
\newtheorem{lem}{Lemma}[section]
\theoremstyle{remark}
\newcommand{\R}{\mathbb{R}}
\renewcommand{\geq}{\geqslant}
\renewcommand{\leq}{\leqslant}
\renewcommand{\phi}{\varphi}
\newcommand{\be}{\begin{eqnarray}}
\newcommand{\ee}{\end{eqnarray}}
\begin{document}

\title{\bf Long-time stability of small FPU solitary waves}

\author{Amjad Khan$^{1}$ and Dmitry Pelinovsky$^{2}$ \\
{\small $^{1}$ Department of Applied Mathematics, Western University, London, ON, Canada, N6A 3K7} \\
{\small $^{2}$ Department of Mathematics, McMaster University, Hamilton, Ontario, Canada, L8S 4K1}  }

\date{\today}
\maketitle

\begin{abstract}
Small-amplitude waves in the Fermi-Pasta-Ulam (FPU) lattice with weakly anharmonic interaction potentials
are described by the generalized Korteweg-de Vries (KdV) equation. Justification of the small-amplitude
approximation is usually performed on the time scale, for which dynamics of the KdV equation is defined.
We show how to extend justification analysis on longer time intervals provided dynamics of the generalized KdV
equation is globally well-posed in Sobolev spaces and either the Sobolev norms are globally bounded
or they grow at most polynomially. The time intervals are extended respectively by the logarithmic or double logarithmic factors
in terms of the small amplitude parameter. Controlling the approximation error on longer time intervals
allows us to deduce nonlinear metastability of small FPU solitary waves from orbital stability of the KdV solitary waves.
\end{abstract}

\section{Introduction}

In this work, we address an open question from \cite{DumaPel} on how to deduce nonlinear metastability or instability
of small Fermi--Pasta--Ulam (FPU) solitary waves from orbital stability or instability of the Korteweg--de Vries (KdV) solitary waves.
Let us consider dynamics of the FPU lattice given by Newton's equations of motion:
\begin{equation}
\label{FPU}
\ddot{x}_n = V'(x_{n+1} - x_n) - V'(x_n - x_{n-1}),
\quad n \in \mathbb{Z},
\end{equation}
where $(x_n)_{n \in \mathbb{Z}}$ is a function of the time
$t\in\mathbb{R}$, with values in $\mathbb{R}^{\mathbb{Z}}$,
the dot denotes the time derivative, and the interaction potential $V$ is smooth.
The coordinate $x_n$ corresponds to the displacement of the $n$-th particle
in a one-dimensional chain from its equilibrium position.
The potential $V$ for anharmonic interactions of particles is taken in the form
\begin{equation}
\label{interaction}
V(u) = \frac{1}{2} u^2 + \frac{\epsilon^2}{p+1} u^{p+1},
\end{equation}
where $p \geq 2$ is integer and the strength of anharmonicity $\epsilon$ can be introduced
by the scaling transformation. The FPU lattice equations (\ref{FPU}) can be rewritten
in the strain variables $u_n := x_{n+1}-x_n$ as follows
\begin{equation}
\label{FPUlattice}
\ddot{u}_n = V'(u_{n+1}) - 2 V'(u_n) + V'(u_{n-1}),
\quad n \in \mathbb{Z}.
\end{equation}
Using the well-known asymptotic multi-scale expansion \cite{BP06,Pego,SW00},
\begin{equation}
\label{ansatz-KdV}
u_n(t) = W(\epsilon(n-t),\epsilon^3 t) + {\rm error \;\; terms},
\end{equation}
yields the generalized KdV equation for the leading-order approximation $W$ given by
\begin{equation}
\label{genKdV}
2 W_{\tau} + \frac{1}{12} W_{\xi \xi \xi} + (W^p)_{\xi} = 0, \quad \xi \in \mathbb{R},
\end{equation}
where $\tau = \epsilon^3 t$ and $\xi = \epsilon (n-t)$.

Local well-posedness of the generalized KdV equation (\ref{genKdV}) in Sobolev spaces $H^s(\mathbb{R})$
is known from the works of Kato \cite{Kato1,Kato2} for $s > \frac{3}{2}$ and Kenig--Ponce--Vega \cite{KPV1,KPV2}
for $s \geq s_p$, where
$$
s_{p=2} = \frac{3}{4}, \quad s_{p=3} = \frac{1}{4}, \quad s_{p = 4} = \frac{1}{12}, \quad
s_{p \geq 5} = \frac{p-5}{2(p-1)}.
$$
For any local solution $W \in C([-\tau_0,\tau_0],H^s(\mathbb{R}))$ of the KdV equation
\eqref{genKdV} with $s \geq 6$ and $\tau_0 > 0$, the error terms in the asymptotic multi-scale expansion (\ref{ansatz-KdV}) can be controlled
as follows. There exist positive constants $\epsilon_0$ and $C_0$ such that,
for all $\epsilon \in (0,\epsilon_0)$, when initial data
$(u_{\rm in},\dot{u}_{\rm in}) \in \ell^2(\mathbb{R})$ satisfy
\begin{equation}
\label{bound-initial-time}
\| u_{\rm in} - W(\epsilon \cdot,0) \|_{\ell^2} +
\| \dot{u}_{\rm in} + \epsilon \partial_{\xi} W(\epsilon \cdot,0) \|_{\ell^2} \leq \epsilon^{3/2},
\end{equation}
the unique solution $(u,\dot{u})$ to the FPU equation (\ref{FPUlattice}) with initial data $(u_{\rm in},\dot{u}_{\rm in})$
belongs to $C^1([-\tau_0\epsilon^{-3},\tau_0 \epsilon^{-3}],\ell^2(\mathbb{Z}))$ and satisfies
\begin{equation}
\label{bound-final-time}
\| u(t) - W(\epsilon (\cdot - t), \epsilon^3 t) \|_{\ell^2} +
\| \dot{u}(t) + \epsilon \partial_{\xi} W(\epsilon (\cdot - t), \epsilon^3 t)  \|_{\ell^2}
\leq C_0 \epsilon^{3/2}, \quad t \in \left[-\tau_0 \epsilon^{-3},\tau_0 \epsilon^{-3}\right].
\end{equation}
The proof of this result is based on the energy estimates and Gronwall's inequality \cite{BP06,Pego,SW00}.

Bound (\ref{bound-final-time}) suggests that small-amplitude FPU solitary waves
are metastable or unstable if the KdV solitary waves are orbitally stable or unstable.
Indeed, the generalized KdV equation (\ref{genKdV}) is known to have orbitally stable solitary waves
for $p = 2,3,4$ and orbitally unstable solitary waves for $p \geq 5$ \cite{PW1}.
However, this simple and widely accepted analogy appears in apparent contradiction
with the energy arguments found in \cite{DumaPel} suggesting unconditional metastability
of small-amplitude FPU solitary waves on the time scale of $\mathcal{O}(\epsilon^{-3})$.

The metastability result from \cite{DumaPel} can be formulated as follows.
Let us denote the traveling-wave solutions of the FPU equation (\ref{FPUlattice})
by $u_{\rm trav}$. Then, for every $\tau_0 > 0$,
there exist positive constants $\epsilon_0$, $\delta_0$ and $C_0$ such that,
for all $\epsilon \in (0,\epsilon_0)$, when initial data $(u_{\rm in},\dot{u}_{\rm in}) \in \ell^2(\mathbb{R})$ satisfy
\begin{equation}
\label{bound-initial}
\delta := \| u_{\rm in} - u_{\rm trav}(0) \|_{\ell^2} +
\| \dot{u}_{\rm in} - \dot{u}_{\rm trav}(0) \|_{\ell^2} \leq \delta_0,
\end{equation}
the unique solution $(u,\dot{u})$ to the FPU equation (\ref{FPUlattice}) with initial data $(u_{\rm in},\dot{u}_{\rm in})$
belongs to $C^1([-\tau_0\epsilon^{-3},\tau_0\epsilon^{-3}],\ell^2(\mathbb{Z}))$ and satisfies
\begin{equation}
\label{bound-final}
\| u(t) - u_{\rm trav}(t) \|_{l^2} +
\| \dot{u}(t) - \dot{u}_{\rm trav}(t) \|_{l^2}
\leq C_0 \delta, \quad t \in \left[-\tau_0 \epsilon^{-3},\tau_0 \epsilon^{-3}\right].
\end{equation}
Similarly to the bound (\ref{bound-final-time}), the bound
(\ref{bound-final}) is also proved with the energy estimates and Gronwall's inequality
complemented with the asymptotic scaling of small-amplitude FPU traveling waves $u_{\rm trav}$
near the KdV solitary waves \cite{DumaPel}.
Nevertheless, the initial data $(u_{\rm in},\dot{u}_{\rm in})$  can be modulated on any spatial scale.

Bound (\ref{bound-final}) suggests unconditional metastability of small-amplitude
FPU solitary waves up to the time scale of $\mathcal{O}(\epsilon^{-3})$ for every $p \geq 2$.
This may be viewed as a contradiction with the bound (\ref{bound-final-time})
that suggests instability of small-amplitude FPU solitary waves at the time
scale of $\mathcal{O}(\epsilon^{-3})$ for $p \geq 5$ because
the corresponding KdV solitary waves are unstable for $p \geq 5$ \cite{PW1}.

Of course, no contradiction arises because the energy methods used in the proof of
the upper bounds on the approximation errors (\ref{bound-final-time}) and (\ref{bound-final})
yield constants $C_0$ that grow exponentially in time $\tau_0$, that is, on the time scale of $\mathcal{O}(\epsilon^{-3})$.
As a result, the exponential divergence of the constant $C_0$ cannot be
distinguished from the exponential instability of the KdV solitary waves in the case $p \geq 5$.
However, this observation also shows that the bound (\ref{bound-final}) is not a reliable evidence to conclude on
metastability of the small-amplitude FPU solitary waves in the case of $p = 2,3,4$.

Nonlinear stability of small-amplitude FPU solitary waves in the case of the classical KdV equation
with $p = 2$ was studied in the series of papers by Friesecke \& Pego \cite{Pego} based on the orbital
and asymptotic stability of the KdV solitons \cite{PW2}.  Similarly, asymptotic stability of several solitary
waves was studied by Mizumachi \cite{miz1,miz2} and Benes, Hoffmann \& Wayne \cite{HW1,HW2} also in the case $p = 2$.
Derivation and analysis of small-amplitude FPU solitary waves were recently
generalized for polyatomic FPU lattices in \cite{GMWZ}.

In the present work, we extend the bound (\ref{bound-final-time}) on the approximation error to
longer time intervals provided dynamics of the generalized KdV
equation (\ref{genKdV}) is globally well-posed in Sobolev spaces and either the Sobolev norms
are globally bounded or they grow at most polynomially.

For the integrable cases $p = 2$ and $p = 3$, a uniform bound on the $H^s(\mathbb{R})$ norms
for any $s \in \mathbb{N}$ can be obtained from conserved quantities of the KdV and modified
KdV hierarchies \cite{Nguenn,Gardner}. For the non-integrable case $p = 4$, the global
solution is controlled in $H^1(\mathbb{R})$ by using the energy conservation, while
the $H^s(\mathbb{R})$ norms with $s \geq 2$ can grow at most polynomially.
In particular, it was proved by Staffilani \cite{Staf}
that for any $s \geq 2$, there exists a constant $C_s$ such that
the unique solution of the generalized KdV equation (\ref{genKdV}) with $p = 4$ satisfies
\begin{equation}
\label{growth-norm}
\| W(\tau) \|_{H^s} \leq C_s |\tau|^{s-1} \quad \mbox{\rm as} \quad |\tau| \to \infty.
\end{equation}
Global solutions to the generalized KdV equation (\ref{genKdV}) with $p \geq 5$ exist
and scatter to zero \cite{KPV2} if the $H^{s_p}(\mathbb{R})$ norm of initial data is small, where
\begin{equation}
\label{s-p}
s_p = \frac{p-5}{2(p-1)}, \quad p \geq 5.
\end{equation}
For the global solutions scattering to zero in the case $p \geq 5$,
the $H^1(\mathbb{R})$ norm is again controlled by the energy conservation \cite{KPV2},
and the polynomial bound (\ref{growth-norm}) holds \cite{Staf}.

The following theorems
extend the bound (\ref{bound-final-time}) on the approximation error to longer time intervals.
The two cases have to be considered separately, depending whether
the $H^s(\mathbb{R})$ norm of the KdV solution is globally bounded
or may grow at most polynomially.

\begin{theo}
\label{theorem-justification-1}
Let $W \in C(\mathbb{R},H^s(\mathbb{R}))$ be a global solution to the generalized KdV equation
\eqref{genKdV} with either $p = 2$ or $p = 3$ for some integer $s \geq 6$.
For fixed $r \in \left( 0, \frac{1}{2} \right)$, there exist positive constants $\epsilon_0$, $C$, and $K$ such that,
for all $\epsilon \in (0,\epsilon_0)$, when initial data $(u_{\rm in},\dot{u}_{\rm in}) \in \ell^2(\mathbb{R})$ satisfy
\begin{equation}
\label{bound-initial-time-1}
\| u_{\rm in} - W(\epsilon \cdot,0) \|_{\ell^2} +
\| \dot{u}_{\rm in} + \epsilon \partial_{\xi} W(\epsilon \cdot,0) \|_{\ell^2} \leq \epsilon^{3/2},
\end{equation}
the unique solution $(u,\dot{u})$ to the FPU equation (\ref{FPUlattice}) with initial data $(u_{\rm in},\dot{u}_{\rm in})$
belongs to
$$
C^1([-t_0(\epsilon),t_0(\epsilon)],\ell^2(\mathbb{Z}))
$$
with $t_0(\epsilon) := r K^{-1} \epsilon^{-3} |\log(\epsilon)|$ and satisfies
\begin{equation}
\label{bound-final-time-1}
\| u(t) - W(\epsilon (\cdot - t), \epsilon^3 t) \|_{\ell^2} +
\| \dot{u}(t) + \epsilon \partial_{\xi} W(\epsilon (\cdot - t), \epsilon^3 t)  \|_{\ell^2}
\leq C \epsilon^{3/2-r}, \quad t \in \left[-t_0(\epsilon),t_0(\epsilon)\right].
\end{equation}
\end{theo}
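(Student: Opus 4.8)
The plan is to rerun the energy-and-Gronwall argument behind \eqref{bound-final-time}, keeping explicit track of how the constants depend on the length of the slow-time interval and on the Sobolev norm of $W$, and then to let that interval grow logarithmically in $\epsilon$. Writing $w_n(t) := W(\epsilon(n-t),\epsilon^3 t)$, the argument of \cite{BP06,Pego,SW00} produces a functional $\mathcal E(t)$ equivalent to the square of $\|u(t) - w(t)\|_{\ell^2} + \|\dot u(t) + \epsilon\,\partial_\xi w(t)\|_{\ell^2}$ and satisfying, while $\sqrt{\mathcal E(t)} \leq \delta_*\,\epsilon$,
\[ \tfrac{d}{dt}\sqrt{\mathcal E(t)} \;\leq\; K\,\epsilon^3\,\sqrt{\mathcal E(t)} \;+\; C(M)\,\epsilon^{9/2}. \]
Here the growth rate carries the factor $\epsilon^3$ because the anharmonic part of \eqref{interaction} carries $\epsilon^2$ while its variation along the modulated profile $w$ is an extra order smaller, since $\partial_t w = -\epsilon\,\partial_\xi W + \epsilon^3\partial_\tau W = \mathcal O(\epsilon)$; the source term $C(M)\epsilon^{9/2}$ is the $\ell^2$ norm of the residual of the ansatz \eqref{ansatz-KdV} (suitably completed), whose control is the point at which $s \geq 6$ is used, since up to six $\xi$-derivatives of $W$ and of $W^p$ occur in it after \eqref{genKdV} is used to trade $\tau$-derivatives for $\xi$-derivatives; and the quadratic FPU nonlinearity, of size $\mathcal O(\epsilon^2)$, has been absorbed into the $\mathcal O(\epsilon^3)$ term, which is legitimate exactly on the range $\sqrt{\mathcal E}\leq\delta_*\epsilon$. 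The essential observation is that $K$, $C(M)$ and $\delta_*$ depend on $W$ only through $M := \sup_{|\tau|\leq\tau_0}\|W(\tau)\|_{H^s}$ and on the fixed structure constants of \eqref{FPUlattice}; they do not otherwise depend on $\tau_0$.

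The new input is that for $p = 2$ or $p = 3$ the quantity $M$ is finite with $\sup$ taken over all of $\R$: since $W \in C(\R,H^s)$ is global, its $H^s$ norm is controlled for every integer $s$ by the conserved quantities of the KdV and mKdV hierarchies \cite{Nguenn,Gardner}. Thus $K$, $C(M)$, $\delta_*$ are genuine $\epsilon$- and $\tau_0$-independent constants. Fix $r \in \big(0,\tfrac12\big)$ and put $\tau_0(\epsilon) := rK^{-1}|\log\epsilon|$, so that the interval $|\tau| \leq \tau_0(\epsilon)$ is exactly $t \in [-t_0(\epsilon),t_0(\epsilon)]$ with $t_0(\epsilon) = rK^{-1}\epsilon^{-3}|\log\epsilon|$, and $e^{K\epsilon^3|t|} \leq e^{K\tau_0(\epsilon)} = \epsilon^{-r}$ there. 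Integrating the differential inequality from $0$, with $\sqrt{\mathcal E(0)} \leq c_0\epsilon^{3/2}$ supplied by \eqref{bound-initial-time-1} and $\int_0^t e^{-K\epsilon^3 s}C(M)\epsilon^{9/2}\,ds \leq K^{-1}C(M)\epsilon^{3/2}$, yields
\[ \sqrt{\mathcal E(t)} \;\leq\; e^{K\epsilon^3 t}\big(c_0 + K^{-1}C(M)\big)\epsilon^{3/2} \;\leq\; C\,\epsilon^{3/2-r}, \qquad C := c_0 + K^{-1}C(M), \]
on every subinterval of $[-t_0(\epsilon),t_0(\epsilon)]$ on which the inequality is in force.

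It remains to verify by a continuity argument that it is in force on the whole interval. The set of $t \in [0,t_0(\epsilon)]$ with $\sqrt{\mathcal E(s)} \leq 2C\epsilon^{3/2-r}$ for all $s \in [0,t]$ is closed and nonempty, and on it the previous display gives $\sqrt{\mathcal E(t)} \leq C\epsilon^{3/2-r}$, strictly less, so it is relatively open; since $2C\epsilon^{3/2-r} \leq \delta_*\epsilon$ for all $\epsilon \in (0,\epsilon_0)$ with $\epsilon_0 = \epsilon_0(M,r)$ small --- this is where $r < \tfrac12$ enters, through $\epsilon^{3/2-r}/\epsilon = \epsilon^{1/2-r} \to 0$ --- the range condition $\sqrt{\mathcal E}\leq\delta_*\epsilon$ is never violated, so the set is all of $[0,t_0(\epsilon)]$. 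The reflection $t\mapsto -t$ covers $[-t_0(\epsilon),0]$, and since $\mathcal E$ is equivalent to the square of the left-hand side of \eqref{bound-final-time-1}, this is the assertion.

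The main obstacle is not the extension step, which only quantifies how far a Gronwall inequality with a fixed $\mathcal O(\epsilon^3)$ rate can be run, but establishing the two quantitative features of the proof of \eqref{bound-final-time} on which everything hangs: that the exponential rate is genuinely $\mathcal O(\epsilon^3)$ with a constant depending on $W$ only through $M$, and that the energy inequality is valid on the range $\sqrt{\mathcal E}\lesssim\epsilon$, not merely $\sqrt{\mathcal E}\lesssim 1$. The first requires reconstructing, or carefully citing, the energy functional of \cite{BP06,Pego,SW00} --- in particular the cancellation of the $\mathcal O(\epsilon^2)$ contributions in the time derivative of the energy and the control of the long-wave part of the error, where the discrete Laplacian is not coercive. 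The second is what forces $r < \tfrac12$: over $[0,t_0(\epsilon)]$ the error is allowed to grow from order $\epsilon^{3/2}$ up to order $\epsilon^{3/2-r}$, and the bootstrap closes only while that remains $\ll \epsilon$, so that the $\mathcal O(\epsilon^2)$ FPU nonlinearity stays dominated by the $\mathcal O(\epsilon^3)$ linear growth.
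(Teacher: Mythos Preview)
Your proposal is correct and follows essentially the same route as the paper: the paper sets up in Section~2 precisely the energy functional $\mathcal{E}$ you invoke (equation~\eqref{energy-type}), proves the differential inequality with the $\epsilon^3$ rate and the $\epsilon^{9/2}$ residual (Lemma~\ref{lemma-energy}), and then in Section~3 runs exactly your Gronwall-plus-bootstrap argument with $\tau_0(\epsilon)=rK^{-1}|\log\epsilon|$ so that $e^{K\tau_0(\epsilon)}=\epsilon^{-r}$. Your identification of why $r<\tfrac12$ is needed --- so that $\epsilon^{3/2-r}\ll\epsilon$ keeps the nonlinear term absorbable into the $\epsilon^3$ linear growth --- matches the paper's estimate~\eqref{estimate-2}, and your closing remarks correctly flag that the substantive work lies in Section~2 rather than in the extension step itself.
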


\begin{theo}
\label{theorem-justification-2}
Let $W \in C(\mathbb{R},H^s(\mathbb{R}))$ be a global solution to the generalized KdV equation
\eqref{genKdV} with either $p = 4$ or $p \geq 5$ and small $\| W(0) \|_{H^{s_p}}$, for some integer $s \geq 6$.
For fixed $r \in \left( 0, \frac{1}{2} \right)$, there exist positive constants $\epsilon_0$, $C$, and $K$
such that, for all $\epsilon \in (0,\epsilon_0)$, when initial data
$(u_{\rm in},\dot{u}_{\rm in}) \in \ell^2(\mathbb{R})$ satisfy
\begin{equation}
\label{bound-initial-time-2}
\| u_{\rm in} - W(\epsilon \cdot,0) \|_{\ell^2} +
\| \dot{u}_{\rm in} + \epsilon \partial_{\xi} W(\epsilon \cdot,0) \|_{\ell^2} \leq \epsilon^{3/2},
\end{equation}
the unique solution $(u,\dot{u})$ to the FPU equation (\ref{FPUlattice}) with initial data $(u_{\rm in},\dot{u}_{\rm in})$
belongs to
$$
C^1([-t_0(\epsilon),t_0(\epsilon)],\ell^2(\mathbb{Z}))
$$
with $t_0(\epsilon) := (2 p K)^{-1} \epsilon^{-3} \log\left( r |\log(\epsilon)| \right)$ and satisfies
\begin{equation}
\label{bound-final-time-2}
\| u(t) - W(\epsilon (\cdot - t), \epsilon^3 t) \|_{\ell^2} +
\| \dot{u}(t) + \epsilon \partial_{\xi} W(\epsilon (\cdot - t), \epsilon^3 t)  \|_{\ell^2}
\leq C \epsilon^{3/2-r}, \quad t \in \left[-t_0(\epsilon),t_0(\epsilon)\right].
\end{equation}
\end{theo}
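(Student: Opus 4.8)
The plan is to revisit the energy estimate underlying the standard bound \eqref{bound-final-time}, but to keep the dependence of every constant on $\epsilon$ and on $\|W(\tau)\|_{H^s}$ explicit instead of absorbing it into one constant $C_0(\tau_0)$. Gronwall's inequality then yields a bound on an $\epsilon$-dependent time interval whose length is governed precisely by the growth rate of $\|W(\tau)\|_{H^s}$, and Theorems~\ref{theorem-justification-1} and \ref{theorem-justification-2} correspond to the globally bounded and the at-most-polynomially growing cases. \emph{Step 1 (error equation).} Write the approximation $\bar u_n(t):=W(\epsilon(n-t),\epsilon^3 t)$ (and, in the first-order or displacement formulation of \eqref{FPUlattice}, the associated approximate velocity/displacement fields), and let $R_n(t)=u_n(t)-\bar u_n(t)$ be the error. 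Substituting into \eqref{FPUlattice}, Taylor-expanding the discrete difference operators, expanding the anharmonic force $\epsilon^2 u^p$ about $\bar u$, and cancelling the leading terms by \eqref{genKdV}, one gets an equation of the schematic form $\dot R=\mathcal{A}R+\epsilon^2\mathcal{B}(\bar u)R+\mathcal{N}(\bar u,R)+\mathrm{Res}$, where $\mathcal{A}$ is the skew-adjoint operator of the harmonic lattice, $\epsilon^2\mathcal{B}(\bar u)$ is the linearization of the anharmonic force (so $\mathcal{B}(\bar u)$ is built from $\bar u^{p-1}$), $\mathcal{N}(\bar u,R)$ is at least quadratic in $R$ with coefficients polynomial in $\bar u$, and the residual --- which is $\mathcal{O}(\epsilon^{6})$ pointwise, involving derivatives of $W$ up to order six (hence $s\geq6$) --- obeys $\|\mathrm{Res}(t)\|_{\ell^2}\leq C\epsilon^{11/2}\,Q(\|W(\epsilon^3 t)\|_{H^s})$ for a fixed polynomial $Q$, using $\|f(\epsilon\cdot)\|_{\ell^2}\lesssim\epsilon^{-1/2}\|f\|_{L^2}$.

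\emph{Step 2 (the energy estimate, which is the technical core).} Let $\mathcal{E}(t)$ be the energy naturally associated with $\mathcal{A}$, arranged (via the displacement formulation) to be comparable to the square of the error norm in \eqref{bound-final-time-2}, so that the skew-adjoint part drops out of $\dot{\mathcal{E}}$ and the non-invertibility of the discrete gradient on $\ell^2$ causes no low-frequency loss. A crude bound on the linearization term $\epsilon^2\langle R,\mathcal{B}(\bar u)R\rangle$ would only give a growth rate $\mathcal{O}(\epsilon^2\|W\|_\infty^{p-1})$, which is one power of $\epsilon$ too large over the time scale $\mathcal{O}(\epsilon^{-3})$; to recover it, pass to a modified energy $\tilde{\mathcal{E}}(t):=\mathcal{E}(t)+\tfrac12\epsilon^2\langle R,\mathcal{C}(\bar u)R\rangle$, with $\mathcal{C}(\bar u)$ chosen so that the leading part of the linearization contribution is a total time derivative up to an $\mathcal{O}(\epsilon)$ remainder --- this exploits that $\bar u$ is slowly varying (each discrete difference hitting $\bar u^{p-1}$ costs a factor $\epsilon$) and that $\partial_t\bar u=\mathcal{O}(\epsilon)$, the slow-time part being re-expressed through \eqref{genKdV} and thus controlled polynomially by $\|W\|_{H^s}$. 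Because $\|W(\tau)\|_{H^s}$ will only ever be of size $(\log\log\tfrac1\epsilon)^{\mathcal{O}(1)}\ll\epsilon^{-2/(p-1)}$ on the relevant interval, the correction is a small perturbation and $\tilde{\mathcal{E}}\sim\mathcal{E}$. One then obtains, as long as $\tilde{\mathcal{E}}(t)^{1/2}$ stays below a fixed $\epsilon$-dependent bootstrap threshold,
\begin{equation*}
\frac{d}{dt}\tilde{\mathcal{E}}(t)^{1/2}\leq K\epsilon^3\,P(\|W(\epsilon^3 t)\|_{H^s})\,\tilde{\mathcal{E}}(t)^{1/2}+C\epsilon^{11/2}\,Q(\|W(\epsilon^3 t)\|_{H^s}),
\end{equation*}
with $P$ a fixed polynomial (leading term of degree $p-1$), the genuinely nonlinear contributions of $\mathcal{N}$ absorbed into the two terms on the right using the bootstrap assumption and the smallness of the threshold. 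I expect this step --- the modified-energy (normal-form) manipulation, the verification that all $\|W\|_{H^s}$-dependences stay polynomial, and the handling of the zero/low-frequency mode --- to be the main obstacle; everything after it is calculus.

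\emph{Step 3 (Gronwall and the choice of the time interval).} Let $\rho(t)$ be the error quantity on the left of \eqref{bound-final-time-2}, which is comparable to $\tilde{\mathcal{E}}(t)^{1/2}$ up to an additive $\epsilon^{3/2}$-times-polynomial term from the mismatch between $\partial_\xi W$ and the discrete velocity field, with $\rho(0)\leq\epsilon^{3/2}$ by \eqref{bound-initial-time-2}. Integrating the differential inequality and substituting $\tau=\epsilon^3 s$ gives, for $|t|\leq t_0(\epsilon)$ and $\tau_0:=\epsilon^3 t_0(\epsilon)$,
\begin{equation*}
\rho(t)\leq\Big(\epsilon^{3/2}+C\epsilon^{5/2}\!\int_0^{\tau_0}\! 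Q(\|W(\tau)\|_{H^s})\,d\tau\Big)\exp\!\Big(K\!\int_0^{\tau_0}\! P(\|W(\tau)\|_{H^s})\,d\tau\Big).
\end{equation*}
By the polynomial growth \eqref{growth-norm} --- valid for $p=4$, and by \cite{Staf,KPV2} for $p\geq5$ with $\|W(0)\|_{H^{s_p}}$ small --- there is a polynomial $\Pi$ with $\int_0^{\tau_0}\big(P(\|W(\tau)\|_{H^s})+Q(\|W(\tau)\|_{H^s})\big)\,d\tau\leq\Pi(\tau_0)$. Choosing $t_0(\epsilon)$ so that $\tau_0=\epsilon^3 t_0(\epsilon)=(2pK)^{-1}\log(r|\log\epsilon|)=\mathcal{O}(\log\log\tfrac1\epsilon)$ forces $\Pi(\tau_0)=\mathcal{O}\big((\log\log\tfrac1\epsilon)^{\deg\Pi}\big)=o(|\log\epsilon|)$, so for $\epsilon$ small the exponent is $\leq r|\log\epsilon|$, the exponential is $\leq\epsilon^{-r}$, the prefactor is $\leq2\epsilon^{3/2}$, and hence $\rho(t)\leq C\epsilon^{3/2-r}$ on $[-t_0(\epsilon),t_0(\epsilon)]$; moreover this is below the bootstrap threshold for $\epsilon$ small, so the continuation argument closes. (The identical computation with $\|W(\tau)\|_{H^s}$ globally bounded gives $\Pi(\tau_0)\sim\tau_0$ and the larger admissible choice $\tau_0=rK^{-1}|\log\epsilon|$ --- this is Theorem~\ref{theorem-justification-1}.) Finally, since $\|\bar u(t)\|_{\ell^2}\lesssim\epsilon^{-1/2}\|W(\epsilon^3 t)\|_{L^2}$ is bounded ($\|W(\tau)\|_{L^2}$ being conserved by \eqref{genKdV}), the a priori bound on $\rho$ yields an a priori bound on $\|u(t)\|_{\ell^2}$, so the local $\ell^2$-solution of \eqref{FPUlattice} extends to all of $[-t_0(\epsilon),t_0(\epsilon)]$ with $(u,\dot u)\in C^1$, completing the proof.
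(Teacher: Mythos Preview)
Your proposal is correct and follows essentially the same route as the paper: the paper also builds a modified energy containing the $\epsilon^2 p W^{p-1}\mathcal{U}_n^2$ correction (its quantity $\mathcal{E}(t)$ in (\ref{energy-type})) precisely to upgrade the linearization growth rate from $\mathcal{O}(\epsilon^2)$ to $\mathcal{O}(\epsilon^3)\times(\text{polynomial in }\|W\|_{H^s})$, then runs a bootstrap/continuation argument with Gronwall on the $\log\log(1/\epsilon)$ time scale. The only cosmetic difference is in the Gronwall step: the paper first majorizes the polynomial growth $\delta(\tau)\lesssim|\tau|^{s-1}$ by an exponential $Ae^{K|\tau|}$ and then uses the explicit double-exponential integrating factor $e^{-e^{2pK\epsilon^3 t}}$ (so that $e^{e^{2pK\tau_0(\epsilon)}}=\epsilon^{-r}$ exactly), whereas you keep the polynomial bound and argue directly that $\Pi(\tau_0)=\mathcal{O}\big((\log\log\tfrac1\epsilon)^{\deg\Pi}\big)=o(|\log\epsilon|)$ --- both lead to the same $t_0(\epsilon)$.
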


We note that the final time of the dynamics of the generalized KdV equation
(\ref{genKdV}) given by $\tau_0(\epsilon) := \epsilon^3 t_0(\epsilon)$ depends on $\epsilon$
and satisfies $\tau_0(\epsilon) \to \infty$ as $\epsilon \to 0$ both
in Theorems \ref{theorem-justification-1} and \ref{theorem-justification-2}.
Bounds (\ref{bound-final-time-1}) and (\ref{bound-final-time-2})
allow us to deduce nonlinear metastability of small FPU solitary waves in the solution
$(u,\dot{u})$ from orbital stability of the KdV solitary waves in
the solution $W$ to the generalized KdV equation (\ref{genKdV}).
In particular, solitary waves of the generalized KdV equation (\ref{genKdV})
are orbitally stable for $p = 2,3,4$, and so are small-amplitude FPU solitary waves
on long but finite time intervals.

Solitary waves of the generalized KdV equation (\ref{genKdV}) are unstable for $p \geq 5$ and the class
of global solutions considered in Theorem \ref{theorem-justification-2} for $p \geq 5$
excludes solitary waves. On the other hand,
dynamics of the small-amplitude waves in the FPU lattice resembles scattering dynamics
of small solutions to the generalized KdV equation (\ref{genKdV}) with $p \geq 5$ \cite{KPV2}.

Extended approximations on longer time intervals become increasingly popular in
the justification analysis of amplitude equations in various evolutionary problems.
One of the pioneer works is developed by Lannes \& Rauch in the context of
validity of the nonlinear geometric optics equations \cite{Lannes}.
Extended time intervals modified by a logarithmic factor of $\epsilon$ were introduced
in the justification of the discrete nonlinear Schr\"{o}dinger equation
in the context of the FPU lattices \cite{DumaJames} and the Klein--Gordon lattices \cite{PPP}.
Our work addresses the extended time intervals in the justification 
of the KdV equation in the context of the FPU lattices.

The rest of the paper is structured as follows. Section 2 represents the basic set up
for justification analysis of the generalized KdV equation (\ref{genKdV}) from
the FPU lattice equation (\ref{FPUlattice}). Justification arguments on the KdV time scale are
well-known and follow the formalism described in \cite{SW00} with a refinement given in \cite{DumaPel}. Sections 3 and 4
present details of the proofs of Theorems \ref{theorem-justification-1} and \ref{theorem-justification-2}.
This part is original and represents the main result of this paper.

\vspace{0.25cm}

{\bf Acknowledgement.} The work of A. Khan was performed during MSc program at McMaster University in 2013-2015.
The work of D. Pelinovsky is supported by the NSERC grant. The authors thank E. Dumas and  G. Schneider
for discussions and collaborations.

\section{Justification setup}

The scalar second-order equation (\ref{FPUlattice}) can be rewritten as the following first-order evolution system
\begin{equation}
\label{FPUvector}
\left\{ \begin{array}{l} \dot{u}_n = q_{n+1} - q_n, \\
\dot{q}_n = u_n - u_{n-1} + \epsilon^2 (u_n^p - u_{n-1}^p), \end{array} \right. \quad n \in \mathbb{Z}.
\end{equation}
Local solutions  $(u,q) \in C^1([-t_0,t_0],\ell^2(\mathbb{Z}))$ exists by standard Picard iterations,
thanks to analyticity of the power nonlinearity with $p \in \mathbb{N}$ and to the boundness
of the shift operators on $\ell^2(\mathbb{Z})$. For a given initial data $(u_{\rm in},q_{\rm in}) \in \ell^2(\mathbb{Z})$,
local solutions are extended to the global solutions $(u,q) \in C^1(\mathbb{R},\ell^2(\mathbb{Z}))$ by decreasing the values of $\epsilon$
thanks to the energy conservation
\begin{equation}
\label{energy}
H := \frac{1}{2} \sum_{n \in \mathbb{Z}} \left( q_n^2 + u_n^2 + \frac{2 \epsilon^2}{p+1} u_n^{p+1} \right).
\end{equation}
If $p$ is odd, no constraints on $\epsilon$ arise for existence of global solutions $(u,q) \in C^1(\mathbb{R},\ell^2(\mathbb{Z}))$
to the FPU lattice equations (\ref{FPUvector}).

Let us use the decomposition
\begin{equation}
\label{decomposition-time}
u_n(t) = W(\epsilon (n-t), \epsilon^3 t) + \mathcal{U}_n(t), \quad
q_n(t) = P_{\epsilon}(\epsilon (n-t),\epsilon^3 t) + \mathcal{Q}_n(t), \quad n \in \mathbb{Z},
\end{equation}
where $W(\xi,\tau)$ is a suitable solution to the generalized KdV equation (\ref{genKdV})
(and thus $W$ is $\epsilon$-independent), whereas the $\epsilon$-dependent function $P_{\epsilon}(\xi,\tau)$ is found from the truncation
of the first equation of the system (\ref{FPUvector}) rewritten as
\begin{equation} \label{FPU-lattice-first-trucated}
P_{\epsilon}(\xi + \epsilon,\tau) - P_{\epsilon}(\xi,\tau) =
-\epsilon \partial_{\xi} W(\xi,\tau) + \epsilon^3 \partial_{\tau} W(\xi,\tau).
\end{equation}
Looking for an approximate solution $P_{\epsilon}$ to this equation up to and including
the formal order of $\mathcal{O}(\epsilon^3)$, we write
\begin{equation}
\label{expansion-P}
P_{\epsilon} := P^{(0)} + \epsilon P^{(1)} + \epsilon^2 P^{(2)} + \epsilon^3 P^{(3)}
\end{equation}
and collect the corresponding powers of $\epsilon$. After routine computations
(see, e.g., \cite{DumaPel}), we obtain
\begin{equation}
\label{expansion-P-explicit}
P_{\epsilon} := - W + \frac{1}{2} \epsilon \partial_{\xi} W - \frac{1}{8} \epsilon^2 \partial_{\xi}^2 W
- \frac{1}{2} \epsilon^2 W^p + \frac{1}{48} \epsilon^3 \partial_{\xi}^3 W + \frac{1}{4} \epsilon^3 p W^{p-1} \partial_{\xi} W.
\end{equation}
Note that equation (\ref{FPU-lattice-first-trucated})
is satisfied by the expansion (\ref{expansion-P-explicit}) only approximately, up to the terms
of the formal order $\mathcal{O}(\epsilon^5)$.

Functions $W$ and $P_{\epsilon}$ depend on $\xi = \epsilon (n-t)$. In order to be able to control
the residual terms of the KdV approximation, we will use the following lemma proved
in \cite{DumaPel} (see also \cite{SW00} for a weaker result).

\begin{lem} \label{Hs-to-l2}
There exists $C>0$ such that for all $X \in H^1(\R)$ and $\epsilon \in (0,1)$,
\begin{equation*}
\| x \|_{\ell^2} \leq C \epsilon^{-1/2} \| X \|_{H^1},
\end{equation*}
where $x_n := X(\epsilon n)$, $n \in \mathbb{Z}$.
\end{lem}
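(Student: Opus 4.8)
The plan is to use the one-dimensional Sobolev embedding $H^1(\R) \hookrightarrow C(\R)$, which makes the pointwise samples $x_n := X(\epsilon n)$ well defined, and then to compare the discrete sum $\sum_n x_n^2$ with the $L^2(\R)$ norm of $X$ by applying the fundamental theorem of calculus on the partition of $\R$ into intervals of length $\epsilon$.

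First I would fix $n \in \Z$, set $I_n := [\epsilon n, \epsilon(n+1)]$, and use that the (absolutely continuous representative of the) function $X$ satisfies
\[
X(\epsilon n)^2 = X(\xi)^2 - 2 \int_{\epsilon n}^{\xi} X(\eta)\, X'(\eta)\, d\eta, \qquad \xi \in I_n.
\]
Integrating this identity in $\xi$ over $I_n$, whose length is $\epsilon$, gives
\[
\epsilon\, X(\epsilon n)^2 = \int_{I_n} X(\xi)^2\, d\xi - 2 \int_{I_n} \int_{\epsilon n}^{\xi} X(\eta)\, X'(\eta)\, d\eta\, d\xi .
\]
The modulus of the double integral is at most $\epsilon \int_{I_n} |X(\eta)|\, |X'(\eta)|\, d\eta$, and by Young's inequality $2 |X|\,|X'| \leq X^2 + (X')^2$ this is bounded by $\epsilon \int_{I_n} \big( X(\eta)^2 + X'(\eta)^2 \big)\, d\eta$.

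Next I would sum over $n \in \Z$. Since the intervals $\{I_n\}_{n\in\Z}$ have pairwise disjoint interiors and cover $\R$, the right-hand side telescopes into full-line integrals, so that
\[
\epsilon \sum_{n\in\Z} X(\epsilon n)^2 \leq \int_{\R} X(\xi)^2\, d\xi + \epsilon \int_{\R} \big( X(\eta)^2 + X'(\eta)^2 \big)\, d\eta \leq (1+\epsilon)\, \| X \|_{H^1}^2 .
\]
Because $\epsilon \in (0,1)$, the prefactor $1+\epsilon$ is at most $2$, and dividing by $\epsilon$ yields $\| x \|_{\ell^2}^2 \leq 2 \epsilon^{-1} \| X \|_{H^1}^2$, which is the claimed bound with $C = \sqrt{2}$.

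The argument presents essentially no obstacle; the only points requiring a little care are the justification of the pointwise identity above and of the use of Fubini's theorem in the double integral, both of which follow from $X \in H^1(\R)$ (so that $X$ is absolutely continuous with $X, X' \in L^2(\R)$ and hence $X X' \in L^1(\R)$). An alternative route through the Poisson summation formula on the Fourier side is possible, but the real-space estimate above is cleaner and produces an explicit constant.
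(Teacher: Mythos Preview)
Your argument is correct: the fundamental-theorem-of-calculus identity for $X^2$, averaged over the interval $I_n$ and summed in $n$, yields $\epsilon\|x\|_{\ell^2}^2 \leq (1+\epsilon)\|X\|_{H^1}^2$, hence the claim with $C=\sqrt{2}$. The only delicate points (absolute continuity of the $H^1$ representative and integrability of $X X'$) you identify and handle properly.

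As for comparison: the paper does not actually give a proof of this lemma. It is stated with the remark that it is ``proved in \cite{DumaPel}'' (with a weaker version in \cite{SW00}), so there is no in-text argument to compare against. Your real-space proof is the standard elementary route and is precisely the kind of argument one expects behind the cited result; it has the added virtue of producing an explicit constant.
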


Substituting the decompositions (\ref{decomposition-time}) and (\ref{expansion-P-explicit})
into the FPU lattice equations (\ref{FPUvector}),
we obtain the evolution problem for the error terms
\begin{eqnarray}
\left\{
\begin{split}
& \dot{\mathcal{U}}_n = \mathcal{Q}_{n+1} - \mathcal{Q}_n + {\rm Res}_n^{(1)}(t), \\
& \dot{\mathcal{Q}}_n = \mathcal{U}_n - \mathcal{U}_{n-1} + \mathcal{R}_n(W,\mathcal{U}) + {\rm Res}_n^{(2)}(t) \\
& \qquad \quad + p \epsilon^2 W^{p-1}(\epsilon(n-t),\epsilon^3t) \mathcal{U}_{n}
- p \epsilon^2 W^{p-1}(\epsilon(n-1-t),\epsilon^3t) \mathcal{U}_{n-1},
\end{split} \right.
\label{FPU-lattice-time}
\end{eqnarray}
where the residual and nonlinear terms are given by
{\small \begin{eqnarray*}
{\rm Res}_n^{(1)}(t) & := & \epsilon \partial_{\xi} W(\epsilon(n-t),\epsilon^3t)
- \epsilon^3 \partial_{\tau} W(\epsilon(n-t),\epsilon^3t) + P_\epsilon(\epsilon(n+1-t),\epsilon^3t) - P_\epsilon(\epsilon(n-t),\epsilon^3t), \\
{\rm Res}_n^{(2)}(t) & := & \epsilon \partial_{\xi} P_{\epsilon}(\epsilon(n-t),\epsilon^3t)
- \epsilon^3 \partial_{\tau} P_{\epsilon}(\epsilon(n-t),\epsilon^3t) + W(\epsilon(n-t),\epsilon^3t) - W(\epsilon(n-1-t),\epsilon^3t) \\
& \phantom{tex} & + \epsilon^2 \left[ W^p(\epsilon(n-t),\epsilon^3t) - W^p(\epsilon(n-1-t),\epsilon^3t) \right]
\end{eqnarray*}}and
\begin{equation*}
\mathcal{R}_n(W,\mathcal{U})(t) :=
\epsilon^2 \sum_{k=2}^p \left( \begin{array}{c} p \\ k \end{array} \right)
\left[ W^{p-k}(\epsilon(n-t),\epsilon^3 t) \mathcal{U}_n^k - W^{p-k}(\epsilon(n-1-t),\epsilon^3 t) \mathcal{U}_{n-1}^k \right].
\end{equation*}
The following result provide bounds on the residual and nonlinear terms, provided that the function $W(\epsilon (\cdot - t), \epsilon^3 t)$
belong to $H^s(\mathbb{R})$ for $s \geq 6$ and satisfies the generalized KdV equation (\ref{genKdV}).

\begin{lem} \label{lemma-residual}
Let $W \in C([-\tau_0,\tau_0],H^s(\mathbb{R}))$ be a solution to the generalized KdV equation
\eqref{genKdV}, for an integer $s \geq 6$ and $\tau_0 > 0$. Define
\begin{equation}
\label{delta}
\delta := \sup_{\tau \in [-\tau_0,\tau_0]} \| W(\tau) \|_{H^s}.
\end{equation}
There exists a positive $\delta$-independent constant $C$ such that
the residual and nonlinear terms satisfy
\begin{equation}
\label{bound-rem}
\| {\rm Res}^{(1)}(t) \|_{\ell^2} + \| {\rm Res}^{(2)}(t) \|_{\ell^2} \leq C \left( \delta + \delta^{2p-1} \right) \epsilon^{9/2}
\end{equation}
and
\begin{equation}
\label{bound-non}
\| \mathcal{R}(W,\mathcal{U})(t) \|_{\ell^2} \leq \epsilon^2 C \left(
\delta^{p-2} + \| \mathcal{U} \|_{\ell^2}^{p-2} \right) \| \mathcal{U} \|_{\ell^2}^2
\end{equation}
for every $t\in[-\tau_0\epsilon^{-3},\tau_0\epsilon^{-3}]$ and $\epsilon \in (0,1)$.
\end{lem}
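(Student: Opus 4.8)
The plan is to handle the two residual vectors and the nonlinear vector by entirely different arguments. The residuals ${\rm Res}^{(1)}_n(t)=F^{(1)}(\epsilon(n-t),\epsilon^3t)$ and ${\rm Res}^{(2)}_n(t)=F^{(2)}(\epsilon(n-t),\epsilon^3t)$ are samplings on the lattice $\epsilon\Z$ of smooth $\xi$-profiles $F^{(1)},F^{(2)}$ built from $W$, its $\xi$-derivatives, and the finite differences of $W$, $W^p$ and $P_\epsilon$, so they will be estimated in $H^1(\R)$ and then pushed to $\ell^2(\Z)$ via Lemma~\ref{Hs-to-l2}; the vector $\mathcal R(W,\mathcal U)$ involves the generic sequence $\mathcal U$ and will be estimated directly in $\ell^2(\Z)$. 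For the residuals I would first expand each finite difference $G(\xi+\epsilon,\tau)-G(\xi,\tau)$ by Taylor's formula with integral remainder, writing out the terms of orders $\epsilon^1,\dots,\epsilon^4$ explicitly and keeping an integral remainder of order $\epsilon^5$ involving only $\partial_\xi^5$ of the corresponding profile, and I would substitute the generalized KdV equation~\eqref{genKdV} and its $\xi$-differentiated versions wherever a $\tau$-derivative of $W$ (or of a $P^{(j)}$) occurs, turning every $\partial_\tau$ into $\xi$-derivatives and powers of $W$. By the choice of $P^{(0)},\dots,P^{(3)}$ in~\eqref{expansion-P-explicit} — which is exactly what makes~\eqref{FPU-lattice-first-trucated} hold up to $\mathcal O(\epsilon^5)$, together with the analogous cancellation coming from the second equation in~\eqref{FPUvector} — all contributions of orders $\epsilon^1,\epsilon^2,\epsilon^3,\epsilon^4$ cancel identically, and $F^{(1)},F^{(2)}$ reduce to finite sums of terms $\epsilon^m\,\partial_\xi^{k_0}W\prod_i\partial_\xi^{k_i}W$ with $m\ge5$, total degree in $W$ between $1$ and $2p-1$, and $\xi$-derivatives of order $\le5$ in the leading $m=5$ terms.

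Given this normal form, the bound~\eqref{bound-rem} follows in two steps. Using the Banach-algebra property of $H^s(\R)$ for $s\ge6$ and the bounds $\|\partial_\xi^k W\|_{H^1}\le\|W\|_{H^6}$ for $k\le5$, one estimates $\|F^{(j)}(\cdot,\tau)\|_{H^1}\le C(\delta+\delta^{2p-1})\epsilon^5$ uniformly in $\tau\in[-\tau_0,\tau_0]$, where the linear power $\delta$ originates from the $\partial_\xi^5W$ remainders of the finite differences, the top power $\delta^{2p-1}$ from the term $W^{p-1}\partial_\tau W\sim W^{2p-2}\partial_\xi W$ produced by $\partial_\tau P^{(2)}$ inside $\epsilon^3\partial_\tau P_\epsilon$, and each intermediate power $\delta^d$ with $1\le d\le 2p-1$ is absorbed by $\delta+\delta^{2p-1}$; this is where the hypothesis $s\ge6$ enters. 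Since translation is an isometry on $H^1(\R)$, applying Lemma~\ref{Hs-to-l2} to $X(\xi):=F^{(j)}(\xi-\epsilon t,\epsilon^3t)$ gives, for $\epsilon\in(0,1)$,
\[
\|{\rm Res}^{(1)}(t)\|_{\ell^2}+\|{\rm Res}^{(2)}(t)\|_{\ell^2}\le C\epsilon^{-1/2}\bigl(\|F^{(1)}(\cdot,\epsilon^3t)\|_{H^1}+\|F^{(2)}(\cdot,\epsilon^3t)\|_{H^1}\bigr)\le C(\delta+\delta^{2p-1})\epsilon^{9/2},
\]
which is precisely~\eqref{bound-rem}.

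For the nonlinear bound~\eqref{bound-non} I would work directly in $\ell^2(\Z)$. Splitting the $n$- and $(n-1)$-contributions, using shift-invariance of the $\ell^2$ norm and H\"older's inequality,
\[
\|\mathcal R(W,\mathcal U)(t)\|_{\ell^2}\le 2\epsilon^2\sum_{k=2}^{p}\binom{p}{k}\,\bigl\|W^{p-k}(\epsilon(\cdot-t),\epsilon^3t)\bigr\|_{\ell^\infty}\,\|\mathcal U^k\|_{\ell^2}.
\]
Then $\|W^{p-k}(\epsilon(\cdot-t),\tau)\|_{\ell^\infty}\le\|W(\tau)\|_{L^\infty}^{p-k}\le C\|W(\tau)\|_{H^s}^{p-k}\le C\delta^{p-k}$ by the Sobolev embedding $H^s\hookrightarrow L^\infty$, while $\|\mathcal U^k\|_{\ell^2}=\|\mathcal U\|_{\ell^{2k}}^k\le\|\mathcal U\|_{\ell^2}^k$ since $\ell^2\hookrightarrow\ell^{2k}$ contractively. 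Factoring out $\|\mathcal U\|_{\ell^2}^2$ and noting that each summand $\delta^{p-k}\|\mathcal U\|_{\ell^2}^{k-2}$ has exponents adding to $p-2$, hence is bounded by $\delta^{p-2}+\|\mathcal U\|_{\ell^2}^{p-2}$, yields~\eqref{bound-non} with a $\delta$-independent constant.

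The main obstacle is entirely in the residual part: organizing the Taylor expansions together with the KdV substitutions so that the order-$\epsilon$ through order-$\epsilon^4$ cancellations are transparent (several of these genuinely use the KdV equation, not merely the definitions of $P^{(0)},\dots,P^{(3)}$), and carefully tracking which $\xi$-derivatives of $W$ and which monomials in $W$ survive, since this simultaneously pins the regularity threshold $s\ge6$ and the exponent $2p-1$ in~\eqref{bound-rem}. The arithmetic itself is routine, and I would rely on the computation already recorded in~\cite{DumaPel}.
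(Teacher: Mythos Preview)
Your proposal is correct and follows essentially the same approach as the paper: Taylor-expand the finite differences with integral remainder, use the KdV equation to eliminate $\tau$-derivatives, observe the cancellation of all $\epsilon^1$--$\epsilon^4$ contributions by the construction of $P_\epsilon$, bound the surviving $\epsilon^5$ remainders in $H^1$ and transfer to $\ell^2$ via Lemma~\ref{Hs-to-l2}; for $\mathcal R(W,\mathcal U)$, bound each binomial term by $\|W\|_{L^\infty}^{p-k}\|\mathcal U\|_{\ell^2}^k$ and interpolate between the endpoint exponents. Your write-up is in fact slightly more careful than the paper's (you make the translation isometry and the $\ell^2\hookrightarrow\ell^{2k}$ embedding explicit, and you identify the $\partial_\tau P^{(2)}$ term as the source of the $\delta^{2p-1}$ power), but the strategy is identical.
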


\begin{proof}
By constructing $P_{\epsilon}$ in  \eqref{expansion-P-explicit},
we have canceled all terms in ${\rm Res}^{(1)}(t)$ up to and including the formal order $\epsilon^4$.
The remainder terms can be written in the closed form with Taylor's theorem as follows:
\begin{equation}
\label{remainder-terms}
\epsilon^5 \int_0^1 (1-r)^4 \partial_\xi^5 W(\epsilon(n-t+r),\epsilon^3t) {\rm d}r
\quad \mbox{and} \quad
\epsilon^5 \int_0^1 (1-r)^2 \partial_\xi^3 W^p(\epsilon(n-t+r),\epsilon^3t) {\rm d}r.
\end{equation}
The associated $\ell^2(\mathbb{Z})$ norm is estimated by $\left( \| W \|_{H^6} + \| W \|_{H^6}^{p} \right) \epsilon^{9/2}$,
thanks to Lemma~\ref{Hs-to-l2}.

By formal expansion of ${\rm Res}^{(2)}(t)$ in $\epsilon$, we confirm that
all terms up to and including the formal order $\epsilon^4$ are canceled
if $W$ satisfies the generalized KdV equation (\ref{genKdV}). On the other hand,
the remainder terms contains terms like (\ref{remainder-terms}) and additionally terms like
\begin{equation}
\epsilon^5 \int_0^1 \partial_\xi W^{2p-1}(\epsilon(n-t+r),\epsilon^3t) {\rm d}r.
\end{equation}
The associated $\ell^2(\mathbb{Z})$ norm is estimated by $\left( \| W \|_{H^6} + \| W \|_{H^6}^{p} + \| W \|_{H^6}^{2p-1}\right) \epsilon^{9/2}$,
thanks to Lemma~\ref{Hs-to-l2}. Hence, we obtain the bound \eqref{bound-rem} by
interpolating between the end point terms.

To prove the bound (\ref{bound-non}), we interpolate the binomial expansion for
$\mathcal{R}_n(W,\mathcal{U})(t)$ between the end point terms and obtain for some $C > 0$:
\begin{eqnarray*}
\| \mathcal{R}(W,\mathcal{U})(t) \|_{\ell^2}
\leq C \epsilon^2 \left( \| W(\epsilon(\cdot - t),\epsilon^3 t)) \|^{p-2}_{L^{\infty}} \| \mathcal{U} \|_{\ell^2}^2
+  \| \mathcal{U} \|_{\ell^p}^p \right).
\end{eqnarray*}
By using continuous embeddings of $H^6(\mathbb{R})$ into $L^{\infty}(\mathbb{R})$ and $\ell^p(\mathbb{Z})$ to $\ell^2(\mathbb{Z})$, we obtain
the bound (\ref{bound-non}).
\end{proof}

For a local solution $(\mathcal{U},\mathcal{Q}) \in C^1([-t_0,t_0],\ell^2(\mathbb{Z}))$ with some $t_0 > 0$
to the perturbed FPU lattice equations (\ref{FPU-lattice-time}), we define the energy-type quantity
\begin{equation}
\mathcal{E}(t) := \frac{1}{2} \sum_{n \in \mathbb{Z}} \left[
\mathcal{Q}_n^2(t) + \mathcal{U}_n^2(t) + \epsilon^2 p W^{p-1}(\epsilon(n-t),\epsilon^3t)) \mathcal{U}^2_{n}(t)
\right].
\label{energy-type}
\end{equation}
The following lemma describes properties of the energy-type quantity $\mathcal{E}(t)$.

\begin{lem}
\label{lemma-energy}
Let $W \in C([-\tau_0,\tau_0],H^s(\mathbb{R}))$ be a solution to the generalized KdV equation
\eqref{genKdV}, for an integer $s \geq 6$ and $\tau_0 > 0$. Let $\epsilon_0 > 0$ be defined by
\begin{equation}
\label{epsilon-0}
\epsilon_0 := \min\left\{ 1, (2p)^{-1/2} \left( \sup_{\tau \in [-\tau_0,\tau_0]}\| W(\cdot,\tau) \|_{L^{\infty}} \right)^{-(p-1)/2} \right\}
\end{equation}
For every $\epsilon \in (0,\epsilon_0)$ and for every local solution
$(\mathcal{U},\mathcal{Q}) \in C^1([-\tau_0 \epsilon^{-3},\tau_0 \epsilon^{-3}],\ell^2(\mathbb{Z}))$
to system (\ref{FPU-lattice-time}), the energy-type quantity (\ref{energy-type}) is coercive with the bound
\begin{equation}
\label{coercivity}
\| \mathcal{Q}(t) \|_{l^2}^2 + \| \mathcal{U}(t) \|_{\ell^2}^2 \leq 4 \mathcal{E}(t),
\quad t \in (-\tau_0 \epsilon^{-3},\tau_0 \epsilon^{-3}).
\end{equation}
Moreover, when $\delta$ is defined by (\ref{delta}),
there exists a positive ($\epsilon,\delta$)-independent constant $C$ such that
\begin{equation}
\label{derivative-estimates}
\left| \frac{d \mathcal{E}}{d t} \right| \leq
C \mathcal{E}^{1/2} \left[ (\delta + \delta^{2p-1}) \epsilon^{9/2} + \epsilon^3 (\delta^{p-1} + \delta^{2p-2})
\mathcal{E}^{1/2} + \epsilon^2 (\delta^{p-2} + \mathcal{E}^{(p-2)/2}) \mathcal{E} \right],
\end{equation}
for every $t\in[-\tau_0\epsilon^{-3},\tau_0\epsilon^{-3}]$ and $\epsilon \in (0,\epsilon_0)$.
\end{lem}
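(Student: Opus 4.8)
The plan is to establish the coercivity bound \eqref{coercivity} and the differential inequality \eqref{derivative-estimates} separately. For \eqref{coercivity}, the only term in $\mathcal{E}(t)$ that need not be nonnegative is $\tfrac12\epsilon^2 p\sum_{n}W^{p-1}(\epsilon(n-t),\epsilon^3 t)\,\mathcal{U}_n^2(t)$. Bounding $|W^{p-1}(\epsilon(n-t),\epsilon^3 t)|\leq\bigl(\sup_{\tau\in[-\tau_0,\tau_0]}\|W(\cdot,\tau)\|_{L^\infty}\bigr)^{p-1}$ pointwise in $n$, the definition \eqref{epsilon-0} of $\epsilon_0$ gives $\epsilon^2 p\,|W^{p-1}(\epsilon(n-t),\epsilon^3 t)|\leq\tfrac12$ for every $\epsilon\in(0,\epsilon_0)$ and every $n\in\mathbb{Z}$, hence $\mathcal{E}(t)\geq\tfrac12\sum_{n}\bigl(\mathcal{Q}_n^2(t)+\tfrac12\mathcal{U}_n^2(t)\bigr)\geq\tfrac14\bigl(\|\mathcal{Q}(t)\|_{\ell^2}^2+\|\mathcal{U}(t)\|_{\ell^2}^2\bigr)$, which is \eqref{coercivity}. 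In particular $\|\mathcal{Q}(t)\|_{\ell^2},\|\mathcal{U}(t)\|_{\ell^2}\leq 2\mathcal{E}^{1/2}(t)$, a bound that will be reused below.

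For \eqref{derivative-estimates} I would differentiate \eqref{energy-type} in $t$; this is justified by the $C^1([-\tau_0\epsilon^{-3},\tau_0\epsilon^{-3}],\ell^2)$ regularity of $(\mathcal{U},\mathcal{Q})$ and the smoothness and $L^\infty$-boundedness of $W$ and its $\xi$-derivatives. Writing $W^{p-1}_n:=W^{p-1}(\epsilon(n-t),\epsilon^3 t)$, a direct computation gives $\frac{d\mathcal{E}}{dt}=\sum_n\mathcal{Q}_n\dot{\mathcal{Q}}_n+\sum_n(1+\epsilon^2 p W^{p-1}_n)\mathcal{U}_n\dot{\mathcal{U}}_n+\tfrac12\epsilon^2 p\sum_n\tfrac{d}{dt}\bigl[W^{p-1}_n\bigr]\mathcal{U}_n^2$. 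Since $\tfrac{d}{dt}W^{p-1}_n=-\epsilon\,\partial_\xi W^{p-1}+\epsilon^3\,\partial_\tau W^{p-1}$ and $\partial_\tau W$ can be replaced, using the generalized KdV equation \eqref{genKdV}, by $\xi$-derivatives of $W$, the last sum is bounded through the embedding $H^s(\mathbb{R})\hookrightarrow L^\infty(\mathbb{R})$, $s\geq 6$, by $C\epsilon^3(\delta^{p-1}+\delta^{2p-2})\|\mathcal{U}(t)\|_{\ell^2}^2$; here $\delta^{p-1}$ comes from $\epsilon\,\partial_\xi W^{p-1}$ and from the Airy part of $\epsilon^3\partial_\tau W^{p-1}$, while $\delta^{2p-2}$ arises when the nonlinear part of the $\mathcal{O}(\epsilon^5)$ contribution $\tfrac12\epsilon^5 p\,\partial_\tau W^{p-1}$ is absorbed crudely into $\mathcal{O}(\epsilon^3)$. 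Substituting \eqref{FPU-lattice-time} into the first two sums, the contributions carrying no residual and no nonlinearity combine, after setting $a_n:=(1+\epsilon^2 p W^{p-1}_n)\mathcal{U}_n$, into $\sum_n\mathcal{Q}_n(a_n-a_{n-1})+\sum_n a_n(\mathcal{Q}_{n+1}-\mathcal{Q}_n)$, which vanishes identically after relabelling $n\mapsto n+1$ in one of the two sums. This exact cancellation is precisely why the weight $\epsilon^2 p W^{p-1}_n$ was inserted simultaneously in $\mathcal{E}$ and in the $\dot{\mathcal{Q}}_n$ equation, and I expect it to be the only structurally delicate step of the proof.

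What then remains of $\frac{d\mathcal{E}}{dt}$ is $\sum_n\mathcal{Q}_n\bigl({\rm Res}_n^{(2)}(t)+\mathcal{R}_n(W,\mathcal{U})(t)\bigr)+\sum_n a_n\,{\rm Res}_n^{(1)}(t)$, together with the weight-derivative term already estimated. I would bound these by Cauchy--Schwarz, using $|1+\epsilon^2 p W^{p-1}_n|\leq\tfrac32$ for $\epsilon\in(0,\epsilon_0)$, the bound $\|\mathcal{Q}(t)\|_{\ell^2},\|\mathcal{U}(t)\|_{\ell^2}\leq 2\mathcal{E}^{1/2}(t)$ from \eqref{coercivity}, and the estimates of Lemma \ref{lemma-residual}: $\|{\rm Res}^{(1)}(t)\|_{\ell^2}+\|{\rm Res}^{(2)}(t)\|_{\ell^2}\leq C(\delta+\delta^{2p-1})\epsilon^{9/2}$ and $\|\mathcal{R}(W,\mathcal{U})(t)\|_{\ell^2}\leq C\epsilon^2(\delta^{p-2}+\|\mathcal{U}\|_{\ell^2}^{p-2})\|\mathcal{U}\|_{\ell^2}^2\leq C\epsilon^2(\delta^{p-2}+\mathcal{E}^{(p-2)/2})\mathcal{E}$. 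This makes the residual pairings at most $C\mathcal{E}^{1/2}(\delta+\delta^{2p-1})\epsilon^{9/2}$, the nonlinear pairing at most $C\mathcal{E}^{1/2}\cdot\epsilon^2(\delta^{p-2}+\mathcal{E}^{(p-2)/2})\mathcal{E}$, and the weight-derivative term at most $C\mathcal{E}^{1/2}\cdot\epsilon^3(\delta^{p-1}+\delta^{2p-2})\mathcal{E}^{1/2}$; summing the three groups yields \eqref{derivative-estimates}. Apart from the discrete summation by parts described above, every step is a routine application of Cauchy--Schwarz, the coercivity bound, and Lemma \ref{lemma-residual}; the only genuine care needed is in tracking the powers of $\epsilon$ and $\delta$.
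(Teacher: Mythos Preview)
Your proposal is correct and follows essentially the same approach as the paper's proof: coercivity from the pointwise bound $\epsilon^2 p\,|W^{p-1}|\leq\tfrac12$, then differentiation of $\mathcal{E}$, the telescoping cancellation of the leading-order terms, and Cauchy--Schwarz combined with Lemma~\ref{lemma-residual} and the coercivity bound for the remaining pairings. Your write-up is in fact slightly more explicit than the paper's, which simply records the final expression for $d\mathcal{E}/dt$ without displaying the summation-by-parts step you spell out with $a_n=(1+\epsilon^2 p W^{p-1}_n)\mathcal{U}_n$.
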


\begin{proof}
Coercivity (\ref{coercivity}) follows from the lower bound applied to (\ref{energy-type})
$$
2 \mathcal{E}(t) \geq \| \mathcal{Q}(t) \|_{\ell}^2 + \left(1
- \epsilon^2 p \| W(\cdot,\tau) \|_{L^{\infty}}^{p-1} \right) \| \mathcal{U}(t) \|_{\ell^2}^2.
$$
Since $1 - \epsilon^2 p \| W(\cdot,\tau) \|_{L^{\infty}}^{p-1} \geq 1/2$, we obtain the bound (\ref{coercivity}).
Note that if $p$ is odd, the lower bound for (\ref{energy-type}) implies (\ref{coercivity}) with $2 \mathcal{E}(t)$
replacing $4 \mathcal{E}(t)$.

Taking derivative of $\mathcal{E}$ with respect to time $t$ and using the perturbed FPU lattice equations (\ref{FPU-lattice-time})
yield the evolution of the energy-type quantity:
\begin{equation*}
\begin{split}
\frac{d \mathcal{E}}{d t} = \sum_{n \in \mathbb{Z}}
& \Big[ \mathcal{Q}_n(t) \mathcal{R}_n(W,\mathcal{U})(t) + \mathcal{Q}_n(t) {\rm Res}_n^{(2)}(t)
+ \mathcal{U}_{n}(t) \left[ 1 + \epsilon^2 p W^{p-1}(\epsilon(n-t),\epsilon^3t) \right]  {\rm Res}_n^{(1)}(t) \\
& + \frac{1}{2} \epsilon^2 p (p-1) W^{p-2}(\epsilon(n-t),\epsilon^3t) \mathcal{U}^2_{n}(t)
(-\epsilon \partial_{\xi} + \epsilon^3 \partial_{\tau}) W(\epsilon(n-t),\epsilon^3t) \Big].
\end{split}
\end{equation*}
By using the Cauchy--Schwarz inequality, we estimate
\begin{eqnarray*}
\left| \frac{d \mathcal{E}}{d t} \right|
& \leq & \| \mathcal{Q} \|_{\ell^2} \| \mathcal{R}(W,\mathcal{U}) \|_{\ell^2}
+ \| \mathcal{Q} \|_{\ell^2} \| {\rm Res}^{(2)} \|_{\ell^2} + \,\frac32\, \| \mathcal{U} \|_{\ell^2}
\left\| {\rm Res}^{(1)} \right\|_{l^2} \\
& \phantom{t} & + \frac{1}{2} \epsilon^3 p (p-1) \| W \|_{L^{\infty}}^{p-2} \left( \| \partial_{\xi} W(\cdot,\tau) \|_{L^{\infty}} +
\epsilon^2 \| \partial_{\tau} W(\cdot,\tau) \|_{L^{\infty}} \right) \| \mathcal{U}(t) \|_{\ell^2}^2.
\end{eqnarray*}
By using estimates (\ref{bound-rem}) and (\ref{bound-non}) in Lemma \ref{lemma-residual}, the generalized KdV equation (\ref{genKdV}) for $W$,
and the coercivity bound (\ref{coercivity}), we finally obtain the estimate (\ref{derivative-estimates}).
\end{proof}

\section{Proof of Theorem \ref{theorem-justification-1}}

Here we use the formalism of Section 2 and prove Theorem \ref{theorem-justification-1}.

We consider global solutions to the KdV and modified KdV equations ($p=2,3$). In this case, it is known \cite{Nguenn}
that for any global solution $W \in C(\mathbb{R},H^s(\mathbb{R}))$ with $s \geq 6$,
there exists a positive constant $\delta$ that only depends on the initial value of $W$ at $\tau = 0$
such that
\begin{equation}
\label{bound-norm}
\| W(\cdot,\tau) \|_{H^s} \leq \delta \quad \mbox{\rm for every} \;\; \tau \in \mathbb{R}.
\end{equation}
In what follows, we neglect mentioning that all constants depend on the choice of $s \geq 6$.

For any initial data $(u_{\rm in},\dot{u}_{\rm in}) \in \ell^2(\mathbb{R})$ satisfying the bound
(\ref{bound-initial-time-1}), there exists a local solution $(u,\dot{u}) \in C^1((-t_0,t_0),\ell^2(\mathbb{Z}))$
to the FPU equation (\ref{FPUlattice}). Equivalently, there exists a local solution
$(u,q) \in C^1((-t_0,t_0),\ell^2(\mathbb{Z}))$ to the FPU lattice equations \eqref{FPUvector}.
The solution can be decomposed according to equation \eqref{decomposition-time}.

Let us set $\mathcal{S} := \mathcal{E}^{1/2}$, where $\mathcal{E}(t)$ is the energy-type quantity
defined by (\ref{energy-type}).  The initial bound \eqref{bound-initial-time-1} ensures that
$\mathcal{S}(0) \leq C_0 \epsilon^{3/2}$ for some constant $C_0 > 0$, if $\epsilon_0$ is
chosen by (\ref{epsilon-0}). For fixed $\epsilon$-independent constants
$r \in \left(0,\frac{1}{2}\right)$, $C > C_0$, and $K > 0$,
let us define the maximal continuation time
\begin{equation}
\label{time-maximal}
T_{C,K,r} := \sup\left\{ T_0 \in (0,r K^{-1} \epsilon^{-3} |\log(\epsilon)|] : \quad
\mathcal{S}(t) \leq C \epsilon^{3/2-r}, \;\; t \in [-T_0,T_0] \right\}.
\end{equation}
Let us also define the maximal evolution time of the generalized KdV equation
(\ref{genKdV}) by $\tau_0(\epsilon) = r K^{-1} |\log(\epsilon)|$.
The energy estimate (\ref{derivative-estimates}) of Lemma \ref{lemma-energy}
can be rewritten for the variable $\mathcal{S}$ by
\begin{equation}
\label{estimate-1}
\left| \frac{d \mathcal{S}}{d t} \right| \leq
C_1 (\delta + \delta^{2p-1}) \epsilon^{9/2} + \epsilon^3 C_2 \left[ (\delta^{p-1} + \delta^{2p-2}) +
\epsilon^{-1} (\delta^{p-2} + \mathcal{S}^{p-2}) \mathcal{S} \right]  \mathcal{S},
\end{equation}
where $C_1$ and $C_2$ are positive constants that are independent of $\delta$ and $\epsilon$.
Since $\delta$ is independent of the maximal existence time $\tau_0(\epsilon)$ and hence is independent of $\epsilon$,
we can choose an $\epsilon$-independent positive constant $K$ sufficiently large such that
\begin{equation}
\label{estimate-2}
C_2 \left[ (\delta^{p-1} + \delta^{2p-2}) +
\epsilon^{-1} (\delta^{p-2} + C^{p-2} \epsilon^{(3/2-r)(p-2)}) C \epsilon^{3/2-r} \right] \leq K,
\end{equation}
as long as $\mathcal{S}(t) \leq C \epsilon^{3/2-r}$, since $r \in \left(0,\frac{1}{2}\right)$.
Using (\ref{estimate-1}) and (\ref{estimate-2}), we obtain
\begin{equation}
\label{estimate-3}
\left| \frac{d}{d t}  e^{-\epsilon^3 K t} \mathcal{S} \right| \leq
C_1 (\delta + \delta^{2p-1}) \epsilon^{9/2} e^{-\epsilon^3 K t}.
\end{equation}
By Gronwall's inequality, we obtain
\begin{equation}
\label{estimate-G}
\mathcal{S}(t) \leq
\left( \mathcal{S}(0) + K^{-1} C_1 (\delta + \delta^{2p-1}) \epsilon^{3/2} \right) \, e^{K \tau_0(\epsilon)},
\quad t \in [-T_{C,K,r},T_{C,K,r}],
\end{equation}
where the exponent is extended to the maximal existence time $t_0(\epsilon) := \epsilon^{-3} \tau_0(\epsilon)$.
From the definition of $\tau_0(\epsilon)$, we have
$$
\tau_0(\epsilon) = r K^{-1} |\log(\epsilon)| \quad \Rightarrow \quad e^{K \tau_0(\epsilon)} =  \epsilon^{-r}.
$$
Thus, we get from (\ref{estimate-G}),
\begin{equation} \label{lastGronwall}
\mathcal{S}(t) \leq
\left( C_0 + K^{-1} C_1 (\delta + \delta^{2p-1})  \right) \, \epsilon^{3/2-r}, \quad t \in [-T_{C,K,r},T_{C,K,r}].
\end{equation}
One can choose an $\epsilon$-independent constant $C > C_0$ sufficiently large such that
\begin{equation} \label{estimate-4}
C_0 + K^{-1} C_1 (\delta + \delta^{2p-1}) \leq C.
\end{equation}
Under the constraints (\ref{estimate-2}) and (\ref{estimate-4}) on $C > C_0$ and $K > 0$,
the time interval in (\ref{time-maximal}) can be extended to the maximal interval with
$T_{C,K,r} = t_0(\epsilon) = \epsilon^{-3} \tau_0(\epsilon)$.
Bound (\ref{bound-final-time-1}) of Theorem \ref{theorem-justification-1} is proved
due to the estimate (\ref{lastGronwall}) and the coercivity bound (\ref{coercivity}),
while the definition of $C$ may need a minor adjustment.

\section{Proof of Theorem \ref{theorem-justification-2}}

Here we use the formalism of Section 2 and prove Theorem \ref{theorem-justification-2}.

We consider global solutions to the generalized KdV equations with $p \geq 4$. It follows from \cite{Staf}
that for any global solution $W \in C(\mathbb{R},H^s(\mathbb{R}))$ with $s \geq 6$, there exists positive constants
$A$ and $K$ such that
\begin{equation}
\label{bound-norm-2}
\delta(\tau) := \sup_{\tau' \in [-\tau,\tau]} \| W(\cdot,\tau') \|_{H^s} \leq A (1 + |\tau|^{s-1}) \leq A e^{K |\tau|}
\quad \mbox{\rm for every} \;\; \tau \in \mathbb{R}.
\end{equation}
Again, we neglect mentioning that all constants depend on the choice of $s \geq 6$.
We note that the global solution to the generalized KdV equation (\ref{genKdV}) with $p \geq 5$ exists
if the $H^{s_p}(\mathbb{R})$ norm of the initial value $W$ at $\tau = 0$ is small, where $s_p$ is given by (\ref{s-p}) \cite{KPV2}.

For any initial data $(u_{\rm in},\dot{u}_{\rm in}) \in \ell^2(\mathbb{R})$ satisfying the bound
(\ref{bound-initial-time-2}), there exists a local solution $(u,\dot{u}) \in C^1((-t_0,t_0),\ell^2(\mathbb{Z}))$
to the FPU equation (\ref{FPUlattice}), or equivalently, a local solution
$(u,q) \in C^1((-t_0,t_0),\ell^2(\mathbb{Z}))$ to the FPU lattice equations \eqref{FPUvector}.
The solution can be decomposed according to equation \eqref{decomposition-time}.

Let us set $\mathcal{S} := \mathcal{E}^{1/2}$, where $\mathcal{E}(t)$ is the energy-type quantity
defined by (\ref{energy-type}).  The initial bound \eqref{bound-initial-time-1} ensures that
$\mathcal{S}(0) \leq C_0 \epsilon^{3/2}$ for some constant $C_0 > 0$, if $\epsilon_0$ is
chosen by (\ref{epsilon-0}). For fixed $\epsilon$-independent constants $r \in \left(0,\frac{1}{2}\right)$,
$C > C_0$, and $K > 0$, where $K$ is the same as in the bound (\ref{bound-norm-2}), let us define the maximal continuation time
\begin{equation}
\label{time-maximal-2}
T_{C,K,r} := \sup\left\{ T_0 \in \left( 0,(2 p K)^{-1} \epsilon^{-3} \log\left( r |\log(\epsilon)|\right)\right] : \quad
\mathcal{S}(t) \leq C \epsilon^{3/2-r}, \;\; t \in [-T_0,T_0] \right\}.
\end{equation}
Note that the maximal evolution time of the generalized KdV equation
(\ref{genKdV}) by
$$
\tau_0(\epsilon) = (2 p K)^{-1} \log\left( r |\log(\epsilon)|\right)
$$
is chosen differently compared to the proof of Theorem \ref{theorem-justification-1}.
The energy estimate (\ref{derivative-estimates}) of Lemma \ref{lemma-energy}
is rewritten for $\mathcal{S}$ by
\begin{equation}
\label{estimate-1-2}
\left| \frac{d \mathcal{S}}{d t} \right| \leq
C_1 (\delta(\tau) + \delta^{2p-1}(\tau)) \epsilon^{9/2} + \epsilon^3 C_2 \left[ (\delta^{p-1}(\tau) + \delta^{2p-2}(\tau)) +
\epsilon^{-1} (\delta^{p-2}(\tau) + \mathcal{S}^{p-2}) \mathcal{S} \right]  \mathcal{S},
\end{equation}
where $C_1$ and $C_2$ are positive constants that are independent of $\delta$ and $\epsilon$.
Since $\delta$ depends on time $\tau$ according to the bound (\ref{bound-norm-2}),
we may choose the $\epsilon$-independent positive constant $K$ in the bound (\ref{bound-norm-2})
sufficiently large such that
\begin{equation}
\label{estimate-2-2}
C_2 \left[ (\delta^{p-1}(\tau) + \delta^{2p-2}(\tau)) +
\epsilon^{-1} (\delta^{p-2}(\tau) + C^{p-2} \epsilon^{(3/2-r)(p-2)}) C \epsilon^{3/2-r} \right]  \leq 2 p K e^{2 p K |\tau|},
\end{equation}
as long as $\mathcal{S}(t) \leq C \epsilon^{3/2-r}$, since $r \in \left(0,\frac{1}{2}\right)$.
Using (\ref{estimate-1-2}) and (\ref{estimate-2-2}), we obtain
\begin{equation}
\label{estimate-3-2}
\left| \frac{d}{d t}  e^{-e^{2 \epsilon^3 p K t}} \mathcal{S} \right| \leq
B \epsilon^{9/2} e^{\epsilon^3 (2p-1) K t} e^{-e^{2 \epsilon^3 p K t}},
\end{equation}
where $B > 0$ is another $\epsilon$-independent constant and the inequality (\ref{estimate-3-2}) is set for $t > 0$
for simplicity. The estimate for $t < 0$ are similar. By Gronwall's inequality, we obtain
\begin{equation}
\label{estimate-G-2}
\mathcal{S}(t) \leq
\left( e^{-1} \mathcal{S}(0) + B F_K) \epsilon^{3/2} \right) \, e^{e^{2 p K \tau_0(\epsilon)}},
\quad t \in [-T_{C,K,r},T_{C,K,r}],
\end{equation}
where the exponent is extended to the maximal existence time $t_0(\epsilon) := \epsilon^{-3} \tau_0(\epsilon)$
and the positive $\epsilon$-independent constant $F_K$ is defined by
$$
F_K := \int_0^{\infty} e^{(2p-1) K \tau} e^{-e^{2 pK \tau}} d \tau < \infty.
$$
From the definition of $\tau_0(\epsilon)$, we have
$$
\tau_0(\epsilon) = (2 p K)^{-1} \log\left(r |\log(\epsilon)|\right) \quad \Rightarrow \quad
e^{e^{2 p K \tau_0(\epsilon)}} =  \epsilon^{-r}.
$$
Thus, we get from (\ref{estimate-G-2}),
\begin{equation}
\label{lastGronwall-2}
\mathcal{S}(t) \leq \left( C_0 + B F_K \right) \, \epsilon^{3/2-r}, \quad t \in [-T_{C,K,r},T_{C,K,r}].
\end{equation}
Note that $F_K \to 0$ as $K \to \infty$. One can choose an $\epsilon$-independent constant $C > C_0$
sufficiently large such that
\begin{equation}
\label{estimate-4-2}
C_0 + B F_K \leq C.
\end{equation}
Under the constraints (\ref{bound-norm-2}), (\ref{estimate-2-2}) and (\ref{estimate-4-2}) on $C > C_0$ and $K > 0$,
we can extend the time interval in (\ref{time-maximal-2}) to the maximal interval with
$T_{C,K,r} = t_0(\epsilon) = \epsilon^{-3} \tau_0(\epsilon)$.
Bound (\ref{bound-final-time-2}) of Theorem \ref{theorem-justification-2} is proved
due to the estimate (\ref{lastGronwall-2}) and the coercivity bound (\ref{coercivity}),
while the definition of $C$ may need a minor adjustment.

\end{document}